\numberwithin{equation}{section}
\declaretheoremstyle[
  bodyfont=\normalfont\itshape,
  headformat=\NAME\ \NUMBER\NOTE,
]{myplain}
\declaretheoremstyle[
  headformat=\NAME\ \NUMBER\NOTE,
]{mydefinition}
\newcommand{\envqed}{{\lower-0.3ex\hbox{$\triangleleft$}}}
\declaretheorem[style=myplain,numberwithin=section]{theorem}
\let\epsilon\varepsilon
\let\phi\varphi
\let\rho\varrho
\renewcommand{\vec}[1]{\pmb{#1}}
\NewDocumentCommand{\opD}{m+g}{%
  \IfNoValueTF{#2}
    {D_{#1}}
    {D_{#1,#2}}%
}
\NewDocumentCommand{\opDsplit}{m+g}{%
  \IfNoValueTF{#2}
    {\widetilde{D}_{#1}}
    {\widetilde{D}_{#1,#2}}%
}
\NewDocumentCommand{\opM}{g}{%
  \IfNoValueTF{#1}
    {M}
    {M_{#1}}%
}
\NewDocumentCommand{\opQ}{g}{%
  \IfNoValueTF{#1}
    {Q}
    {Q_{#1}}%
}
\NewDocumentCommand{\opI}{g}{%
  \IfNoValueTF{#1}
    {I}
    {I_{#1}}%
}
\NewDocumentCommand{\opV}{g}{%
  \IfNoValueTF{#1}
    {V}
    {V_{#1}}%
}
\NewDocumentCommand{\opB}{g}{%
  \IfNoValueTF{#1}
    {B}
    {B_{#1}}%
}
\NewDocumentCommand{\opR}{g}{%
  \IfNoValueTF{#1}
    {R}
    {R_{#1}}%
}
\NewDocumentCommand{\opN}{m+g}{%
  \IfNoValueTF{#2}
    {N_{#1}}
    {N_{#1,#2}}%
}
\NewDocumentCommand{\x}{g}{%
  \IfNoValueTF{#1}
    {\vec{x}}
    {\vec{x}_{#1}}%
}
\NewDocumentCommand{\fnum}{g}{%
  \IfNoValueTF{#1}
    {f^{\mathrm{num}}}
    {f^{\mathrm{num,#1}}}%
}
\NewDocumentCommand{\vecfnum}{g}{%
  \IfNoValueTF{#1}
    {\vec{f}^{\mathrm{num}}}
    {\vec{f}^{\mathrm{num,#1}}}%
}
\NewDocumentCommand{\vecfcorr}{g}{%
  \IfNoValueTF{#1}
    {\vec{f}^{\mathrm{corr}}}
    {\vec{f}^{\mathrm{corr,#1}}}%
}
\NewDocumentCommand{\fvol}{g}{%
  \IfNoValueTF{#1}
    {f^{\smash{\mathrm{vol}}}}
    {f^{\smash{\mathrm{vol,#1}}}}%
}
\newcommand{\numflux}[1]{\widehat{#1}}
\newcommand{\mean}[1]{{\{\mkern-6mu\{}#1{\}\mkern-6mu\}}}
\newcommand{\jump}[1]{{[\mkern-3mu[}#1{]\mkern-3mu]}}
\newcommand{\orcid}[1]{ORCID:~\href{https://orcid.org/#1}{#1}}
\newenvironment{keywords}{\par\textbf{Key words.}}{\par}
\newenvironment{AMS}{\par\textbf{AMS subject classification.}}{\par}
\title{A discontinuous Galerkin discretization of elliptic problems with improved
convergence properties using summation by parts operators} 
\date{February 24, 2023} 
\author[1]{Hendrik Ranocha\thanks{\orcid{0000-0002-3456-2277}}}
\affil[1]{Applied Mathematics, University of Hamburg, Germany}
\begin{document}

\maketitle

\begin{abstract}
\noindent
  Nishikawa (2007) proposed to reformulate the classical Poisson equation as a
steady state problem for a linear hyperbolic system. This results in optimal
error estimates for both the solution of the elliptic equation and its gradient.
However, it prevents the application of well-known solvers for elliptic
problems. We show connections to a discontinuous Galerkin (DG) method analyzed
by Cockburn, Guzmán, and Wang (2009) that is very difficult to implement in
general. Next, we demonstrate how this method can be implemented efficiently
using summation by parts (SBP) operators, in particular in the context of
SBP DG methods such as the DG spectral element method (DGSEM). The resulting scheme
combines nice properties of both the hyperbolic and the elliptic point of view,
in particular a high order of convergence of the gradients, which is one order
higher than what one would usually expect from DG methods for elliptic problems.

\end{abstract}

\begin{keywords}
  discontinuous Galerkin methods,
  summation by parts operators,
  superconvergence,
  elliptic problems,
  Poisson equation,
  hyperbolic diffusion
\end{keywords}

\begin{AMS}
  65N30, 
  65N35, 
  65N06, 
  65M60, 
  65M70, 
  65M06  
\end{AMS}

\section{Introduction}
\label{sec:introduction}

Solving a Poisson equation $- \Delta \phi = f$
in a bounded domain $\Omega \subset \mathbb{R}^d$ with appropriate boundary
conditions (BCs) is a key task in many scientific simulations.
Nishikawa \cite{nishikawa2007first} proposed to compute numerical solutions
as steady state limits of the hyperbolic system
\begin{equation}
\label{eq:FOHS}
\begin{aligned}
  \partial_t \phi - \nabla \cdot q &= f,
  \\
  \partial_t q - \frac{1}{T_r} \nabla \phi &= -\frac{1}{T_r} q,
\end{aligned}
\end{equation}
where $T_r > 0$ is a relaxation time that can be chosen to accelerate
the convergence to the steady state  \cite{nishikawa2018dimensional}.
Some earlier works on this ``hyperbolic heat equation'' are
\cite{cattaneo1958forme,nagy1994behavior,vanleer2001computational};
some later articles based on the idea are
\cite{nishikawa2010first,abgrall2014high,abgrall2015linear,desantis2015high}.

The ``hyperbolic diffusion'' approach enables optimal convergence not only of
the potential~$\phi$ but also of the gradient~$q$ for discontinuous Galerkin
(DG) methods. Moreover, it simplifies the coupling to hyperbolic equations in
multi-physics problems such as astrophysical fluid flows with self-gravity
\cite{schlottkelakemper2021purely}. However, it would be nice to keep the
superconvergence properties of the gradients in a classical elliptic formulation
to use state-of-the-art high-performance solvers
\cite{kronbichler2018performance,fehn2020hybrid}.
Focusing on DG methods, we will explain that the
steady-state formulation of \eqref{eq:FOHS} is equivalent to a scheme analyzed by
\citet{cockburn2009superconvergent}. However, this method appears to be
difficult to implement since
a linear system needs to be solved to compute the gradient
$q$, i.e., to evaluate the residual of the elliptic discretization. We will
explain how this difficulty can be solved for methods using summation by parts (SBP)
operators, in particular for discontinuous Galerkin spectral element methods
(DGSEM) using Gauss-Lobatto-Legendre nodes \cite{gassner2013skew}. See
\cite{manzanero2018bassi} for more observations how the SBP structure of DGSEM
can be used to analyze and improve DG methods for elliptic problems.

\section{Main result}
\label{sec:main}

We focus on 1D for simplicity. All results extend
to the multi-dimensional case using tensor product spaces, e.g., DGSEM.
The weak formulation of the steady state of \eqref{eq:FOHS} with test functions
$\psi, \chi$ on an interval $(x_i, x_{i+1})$ is
\begin{equation}
\label{eq:FOHS-upwind-as-elliptic-DG-weak}
  \int_{x_{i}}^{x_{i+1}} q \, \partial_x \chi \dif x
  =
  \int_{x_{i}}^{x_{i+1}} f \, \chi \dif x
  + \left[
    \numflux{q} \, \chi
  \right]_{x_{i}}^{x_{i+1}},
  \qquad
  \int_{x_{i}}^{x_{i+1}} q \, \psi \dif x
  =
  - \int_{x_{i}}^{x_{i+1}} \phi \, \partial_x \psi \dif x
  + \left[
    \numflux{\phi} \, \psi
  \right]_{x_{i}}^{x_{i+1}},
\end{equation}
where $\numflux{q}, \numflux{\phi}$ are numerical fluxes.
This steady state formulation is obtained by multiplying the second equation
by $T_r$. Thus, the system depends on the relaxation time $T_r$ only via the
numerical fluxes $\numflux{q}, \numflux{\phi}$ associated to the time-dependent
problem \eqref{eq:FOHS}.
The classical upwind numerical fluxes of the hyperbolic system
\eqref{eq:FOHS} are \cite{nishikawa2018dimensional}
\begin{equation}
\label{eq:FOHS-upwind-as-elliptic-fluxes}
  \numflux{\phi} = \mean{\phi} + \frac{\sqrt{T_r}}{2} \jump{q},
  \qquad
  \numflux{q} = \mean{q} + \frac{1}{2 \sqrt{T_r}} \jump{\phi},
\end{equation}
where $\mean{\cdot}$ denotes the arithmetic mean and $\jump{\cdot}$ the
jump at an interface.
In the context of the Poisson equation $-\Delta \phi = f$, these numerical
fluxes fit into the classical framework of \citet{arnold2002unified}; they
are consistent and conservative (and hence result in adjoint consistency).
\citet{castillo2000priori} analyzed similar numerical fluxes of the form
\begin{equation}
\label{eq:fluxes-cockburn2009superconvergent}
  \numflux{\phi} = \mean{\phi} - C_{12} \jump{\phi} + C_{22} \jump{q},
  \qquad
  \numflux{q} = \mean{q} + C_{11} \jump{\phi} + C_{12} \jump{q},
\end{equation}
which match \eqref{eq:FOHS-upwind-as-elliptic-fluxes} for $C_{12} = 0$. Using
polynomials of degree $p$, they proved that $(\phi, q)$ converge with orders
$(p + 1, p + 1/2)$ on general meshes if $|C_{12}|$, $|C_{11}|$, and $C_{22}$
are of order unity. \citet{cockburn2009superconvergent} extended this analysis
and proved that $(\phi, q)$ converge with optimal order $p + 1$ if $C_{11} > 0$,
$C_{22} > 0$, $C_{11} \propto 1 / C_{22}$, and $C_{11}, |C_{12}|$ are bounded,
which is exactly the situation for the numerical fluxes
\eqref{eq:FOHS-upwind-as-elliptic-fluxes}.
However, \citet{cockburn2009superconvergent} noted ``Of course, the DG methods
under consideration are difficult to implement'' since the numerical flux
$\numflux{\phi}$ used to compute the gradient $q$ depends on $q$. Thus,
a linear system needs to be solved to compute the residual of the elliptic
discretization.

In each element $e$, SBP operators \cite{svard2014review,fernandez2014review}
are given by
i) a discrete derivative operator $D_e$ approximating $\partial_x$,
ii) a diagonal mass/norm matrix $M_e$ approximating the $L^2$ inner product, and
iii) interpolation operators $t_{e,L/R}^T$ evaluating a numerical solution at
the left/right boundary of the element $e$.
Throughout, we use nodal approximations with grid nodes at the boundaries of
each element as in DGSEM. In this case, $t_{e,L}^T = (1, 0, \dots, 0)$ and
$t_{e,R}^T = (0, \dots, 0, 1)$. Furthermore, we require the SBP condition
$M_e D_e + (M_e D_e)^T = t_{e,R} t_{e,R}^T - t_{e,L} t_{e,L}^T$ mimicking
integration by parts \cite{gassner2013skew}.

In this framework, the DG discretization \eqref{eq:FOHS-upwind-as-elliptic-DG-weak}
can be written in the equivalent strong form as
\begin{equation}
\begin{aligned}
  q_{e}
  &=
  D_{e} \phi_{e}
  + M_{e}^{-1} t_{e,R} \left(
    \numflux{\phi} - t_{e,R}^T \phi_{e}
  \right)
  - M_{e}^{-1} t_{e,L} \left(
    \numflux{\phi} - t_{e,L}^T \phi_{e}
  \right),
  \\
  -D_{e} q_{e}
  &=
  f
  + M_{e}^{-1} t_{e,R} \left(
    \numflux{q} - t_{e,R}^T q_{e}
  \right)
  - M_{e}^{-1} t_{e,L} \left(
    \numflux{q} - t_{e,L}^T q_{e}
  \right).
\end{aligned}
\end{equation}
Here, $q_e$, $\phi_e$, and $f$ are the vectors of coefficients representing the
respective polynomials in element $e$ in the chosen nodal basis.
Now, we are prepared to formulate the main result of this short note.

\begin{theorem}
\label{thm:q-local}
  The discrete gradient $q$ of \eqref{eq:FOHS-upwind-as-elliptic-DG-weak} with
  numerical fluxes \eqref{eq:FOHS-upwind-as-elliptic-fluxes} can be evaluated
  locally using only surface values of $\phi$ and $D \phi$ from neighboring
  elements if diagonal-norm SBP operators including the boundaries are used.
  In particular, DGSEM is included in this class of SBP operators.
\end{theorem}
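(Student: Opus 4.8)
The plan is to use the special structure of a diagonal-norm SBP operator whose nodes include the element boundaries, namely $t_{e,L}^T = (1, 0, \dots, 0)$, $t_{e,R}^T = (0, \dots, 0, 1)$, and $M_e$ diagonal. Then $t_{e,R}^T M_e^{-1} t_{e,R} = 1/m_e$ with $m_e := t_{e,R}^T M_e t_{e,R} > 0$, $t_{e,L}^T M_e^{-1} t_{e,L} = 1/\ell_e$ with $\ell_e := t_{e,L}^T M_e t_{e,L} > 0$, and $t_{e,R}^T M_e^{-1} t_{e,L} = t_{e,L}^T M_e^{-1} t_{e,R} = 0$. Reading off the strong form for $q_e$, this means that the two surface correction terms only change the first and last components of $q_e$: at every interior node one simply has $q_e = D_e \phi_e$, a purely element-local quantity. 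Hence the only genuinely coupled degrees of freedom are the two boundary traces $t_{e,L}^T q_e$ and $t_{e,R}^T q_e$ of the discrete gradient.

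Next I would localize these traces interface by interface. Fix an interface shared by element $e$ on the left and element $e+1$ on the right, and set $a := t_{e,R}^T q_e$ and $b := t_{e+1,L}^T q_{e+1}$. Multiplying the $q_e$-equation by $t_{e,R}^T$ and the $q_{e+1}$-equation by $t_{e+1,L}^T$ and using the identities above yields
\begin{align*}
  a &= t_{e,R}^T D_e \phi_e + \frac{1}{m_e}\bigl( \numflux{\phi} - t_{e,R}^T \phi_e \bigr),
  \\
  b &= t_{e+1,L}^T D_{e+1} \phi_{e+1} - \frac{1}{\ell_{e+1}}\bigl( \numflux{\phi} - t_{e+1,L}^T \phi_{e+1} \bigr),
\end{align*}
where $\numflux{\phi}$ denotes the single numerical flux at this interface. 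Now I insert the flux formula~\eqref{eq:FOHS-upwind-as-elliptic-fluxes}: here $\mean{\phi} = \frac{1}{2}\bigl(t_{e,R}^T \phi_e + t_{e+1,L}^T \phi_{e+1}\bigr)$ uses only surface values of $\phi$, and $\jump{q} = b - a$ uses only $a$ and $b$, so the two displayed equations close into a linear system for the pair $(a,b)$.

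This system has constant size ($2 \times 2$), it is decoupled from all other interfaces, and its right-hand side depends only on the surface values $t_{e,R}^T \phi_e$, $t_{e+1,L}^T \phi_{e+1}$ of $\phi$ and on the surface values $t_{e,R}^T D_e \phi_e$, $t_{e+1,L}^T D_{e+1} \phi_{e+1}$ of $D\phi$ from the two elements adjacent to the interface --- exactly the data allowed by the theorem. With $\alpha := \sqrt{T_r}/(2 m_e) > 0$ and $\beta := \sqrt{T_r}/(2 \ell_{e+1}) > 0$ the coefficient matrix is $\left(\begin{smallmatrix} 1 + \alpha & -\alpha \\ -\beta & 1 + \beta \end{smallmatrix}\right)$, whose determinant is $1 + \alpha + \beta > 0$; hence it is uniquely solvable for $(a,b)$, and thus also for $\numflux{\phi}$. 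Carrying this out at every interior interface, and substituting the prescribed boundary data where a neighbor is missing at the ends of the domain, produces all boundary traces of $q$; together with $q_e = D_e \phi_e$ at the interior nodes this reconstructs $q$ from element-local derivatives and these small local solves alone. Since Gauss--Lobatto--Legendre DGSEM has a diagonal mass matrix with boundary nodes, it belongs to this class, which gives the final claim.

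The step I expect to need the most care --- rather than a genuine obstacle --- is the bookkeeping: fixing the signs and orientations in the jump and mean terms, treating the two physical boundary interfaces consistently, and making explicit that the interior-node identity $q_e = D_e \phi_e$ is precisely what the ``diagonal-norm, boundaries included'' hypothesis buys, since it fails for, e.g., interior Gauss nodes where $t_{e,L/R}$ are not canonical basis vectors and $M_e^{-1} t_{e,L/R}$ is dense, so that the surface terms pollute all components of $q_e$. Unique solvability of the per-interface system is immediate from $T_r > 0$ and positivity of the diagonal norm, so nothing delicate arises there.
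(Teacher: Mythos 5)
Your proposal is correct and follows essentially the same argument as the paper: the diagonal mass matrix together with boundary nodes makes the surface terms act only on the interface degrees of freedom, so the implicit coupling of $q$ through $\numflux{\phi}$ reduces to an explicitly solvable interface-local problem, with Dirichlet boundaries posing no issue since there $\numflux{\phi}$ does not involve $q$. The only cosmetic difference is that you solve a $2\times 2$ system for the two traces $(a,b)$ at each interface, whereas the paper solves a single scalar equation for the jump $\jump{q}$ and substitutes back --- the same computation in slightly different bookkeeping.
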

\begin{proof}
  It suffices to consider two elements indicated by subscripts $l, r$.
  Abbreviating surface terms not belonging to their common interface as
  $ST_{l,L}$ (left element, left surface) and
  $ST_{r,R}$ (right element, right surface),
  the corresponding discretizations are
  \begin{equation}
  \label{eq:ql-and-qr-implicit}
  \begin{aligned}
    q_{l}
    &=
    D_{l} \phi_{l}
    + M_{l}^{-1} t_{l,R} \left(
      \frac{1}{2} (t_{r,L}^T \phi_{r} - t_{l,R}^T \phi_{l})
      + \frac{\sqrt{T_r}}{2} (t_{r,L}^T q_{r} - t_{l,R}^T q_{l})
    \right)
    + ST_{l,L},
    \\
    q_{r}
    &=
    D_{r} \phi_{r}
    - M_{r}^{-1} t_{r,L} \left(
      -\frac{1}{2} (t_{r,L}^T \phi_{r} - t_{l,R}^T \phi_{l})
      + \frac{\sqrt{T_r}}{2} (t_{r,L}^T q_{r} - t_{l,R}^T q_{l})
    \right)
    + ST_{r,R}.
  \end{aligned}
  \end{equation}
  Again, $q_{l/r}$ and $\phi_{l/r}$ are the vectors of coefficients
  representing the respective polynomials in the left/right element in
  the chosen nodal basis.
  Since boundary nodes are included and the mass matrix is diagonal, the surface
  terms vanish everywhere except at their corresponding interface nodes.
  In particular, the restriction of $q_{l}$ to the right surface of element $l$
  is not influenced by the left surface term $ST_{l,L}$.
  Hence, the jump of boundary values of $q$ is
  \begin{equation}
  \begin{aligned}
    t_{r,L}^T q_{r} - t_{l,R}^T q_{l}
    &=
    t_{r,L}^TD_{r} \phi_{r}
    - t_{r,L}^T M_{r}^{-1} t_{r,L} \left(
      -\frac{1}{2} (t_{r,L}^T \phi_{r} - t_{l,R}^T \phi_{l})
      + \frac{\sqrt{T_r}}{2} (t_{r,L}^T q_{r} - t_{l,R}^T q_{l})
    \right)
    \\
    &\quad
    - t_{l,R}^T D_{l} \phi_{l}
    - t_{l,R}^T M_{l}^{-1} t_{l,R} \left(
      \frac{1}{2} (t_{r,L}^T \phi_{r} - t_{l,R}^T \phi_{l})
      + \frac{\sqrt{T_r}}{2} (t_{r,L}^T q_{r} - t_{l,R}^T q_{l})
    \right).
  \end{aligned}
  \end{equation}
  This equation can be solved for the jump of interface values of $q$,
  \begin{equation}
    t_{r,L}^T q_{r} - t_{l,R}^T q_{l}
    =
    - c_1 (t_{r,L}^T \phi_{r} - t_{l,R}^T \phi_{l})
    + c_2 (t_{r,L}^T D_{r} \phi_{r} - t_{l,R}^T D_{l} \phi_{l}),
  \end{equation}
  where
  \begin{equation}
    c_1
    =
    \frac{1}{2} \frac{t_{r,L}^T M_{r}^{-1} t_{r,L} - t_{l,R}^T M_{l}^{-1} t_{l,R}}
                     {1 + \frac{\sqrt{T_r}}{2} (t_{r,L}^T M_{r}^{-1} t_{r,L} + t_{l,R}^T M_{l}^{-1} t_{l,R})},
    \quad
    c_2
    =
    \frac{1}
         {1 + \frac{\sqrt{T_r}}{2} (t_{r,L}^T M_{r}^{-1} t_{r,L} + t_{l,R}^T M_{l}^{-1} t_{l,R})}.
  \end{equation}
  Note that $c_1$ vanishes for uniform grids with symmetric quadrature rules.
  In general, $c_1$ and $c_2$ depend on the grid spacing.
  Inserting this expression of the jump of $q$ at the interface into
  \eqref{eq:ql-and-qr-implicit} yields
  \begin{equation}
  \begin{aligned}
    q_{l}
    &=
    D_{l} \phi_{l}
    + M_{l}^{-1} t_{l,R} \left(
      \frac{1}{2} (1 - \sqrt{T_r} c_{1,R}) (t_{r,L}^T \phi_{r} - t_{l,R}^T \phi_{l})
      + \frac{\sqrt{T_r}}{2} c_{2,R} (t_{r,L}^T D_{r} \phi_{r} - t_{l,R}^T D_{l} \phi_{l})
    \right)
    + ST_{l,L},
    \\
    q_{r}
    &=
    D_{r} \phi_{r}
    + M_{r}^{-1} t_{r,L} \left(
      \frac{1}{2} (1 + \sqrt{T_r} c_{1,L}) (t_{r,L}^T \phi_{r} - t_{l,R}^T \phi_{l})
      - \frac{\sqrt{T_r}}{2} c_{2,L} (t_{r,L}^T D_{r} \phi_{r} - t_{l,R}^T D_{l} \phi_{l})
    \right)
    + ST_{r,R}.
  \end{aligned}
  \end{equation}
  To sum up, the gradient in an interior element $e$ can be computed explicitly as
  \begin{equation}
  \label{eq:q-interior}
  \begin{aligned}
    q_{e}
    =
    D_{e} \phi_{e}
    &+ M_{e}^{-1} t_{e,R} \left(
      \frac{1}{2} (1 - \sqrt{T_r} c_{1,R}) \jump{\phi}_R
      + \frac{\sqrt{T_r}}{2} c_{2,R} \jump{D \phi}_R
    \right)
    \\
    &+ M_{e}^{-1} t_{e,L} \left(
      \frac{1}{2} (1 + \sqrt{T_r} c_{1,L}) \jump{\phi}_L
      - \frac{\sqrt{T_r}}{2} c_{2,L} \jump{D \phi}_L
    \right).
  \end{aligned}
  \end{equation}
  At a boundary point where a Dirichlet condition is imposed weakly for the
  potential $\phi$, the numerical fluxes based on an energy analysis for the
  hyperbolic diffusion system are the ones used by
  \citet{cockburn2009superconvergent}, i.e.,
  \begin{equation}
    \numflux{\phi} = \phi^\mathrm{boundary},
    \qquad
    \numflux{q} = q^\mathrm{interior} + \frac{1}{2 \sqrt{T_r}} \jump{\phi}.
  \end{equation}
  Hence, the numerical flux $\numflux{\phi}$ does not depend on $q$ at a Dirichlet
  boundary and no special care is needed. Thus, the discretization in the element
  at the left boundary is
  \begin{equation}
  \label{eq:q-left-boundary}
    q_{e}
    =
    D_{e} \phi_{e}
    + M_{e}^{-1} t_{e,R} \left(
      \frac{1}{2} (1 - \sqrt{T_r} c_{1,R}) \jump{\phi}_R
      + \frac{\sqrt{T_r}}{2} c_{2,R} \jump{D \phi}_R
    \right)
    + M_{e}^{-1} t_{e,L} (t_{e,L}^T \phi_{e} - \phi^\mathrm{BC}).
  \end{equation}
  Similarly, the gradient in the element at the right boundary is
  \begin{equation}
  \label{eq:q-right-boundary}
    q_{e}
    =
    D_{e} \phi_{e}
    + M_{e}^{-1} t_{e,R} (\phi^\mathrm{BC} - t_{e,R}^T \phi_{e})
    + M_{e}^{-1} t_{e,L} \left(
      \frac{1}{2} (1 + \sqrt{T_r} c_{1,L}) \jump{\phi}_L
      - \frac{\sqrt{T_r}}{2} c_{2,L} \jump{D \phi}_L
    \right).
  \end{equation}
  Thus, the gradient can be computed locally using surface values of
  $\phi$ and $D \phi$ from neighboring elements.
\end{proof}

\section{Numerical experiments}
\label{sec:numerical}

We demonstrate the convergence properties of the method for
several Poisson problems summarized in Table~\ref{tab:setups}.
The right-hand side $f$ is chosen based on the solution $\varphi$.
We use Dirichlet BCs for non-periodic setups and vanishing mean values
of $\phi$ for periodic BCs.
The non-periodic 2D setup is taken from \cite{cockburn2009superconvergent}.
We choose the relaxation time $T_r$ as recommended in
\cite{nishikawa2018dimensional}, i.e., $T_r = L_r^2$ where the reference
length scale $L_r$ is set to
$L_r = (x_{\mathrm{max}} - x_{\mathrm{min}}) / (2 \pi)$
for a 1D interval $(x_{\mathrm{min}}, x_{\mathrm{max}})$ and
\begin{equation*}
  L_r = \frac{1}{2 \pi} \frac{(x_{\mathrm{max}} - x_{\mathrm{min}}) (y_{\mathrm{max}} - y_{\mathrm{min}})}{\sqrt{(x_{\mathrm{max}} - x_{\mathrm{min}})^2 + (y_{\mathrm{max}} - y_{\mathrm{min}})^2}}
\end{equation*}
for a 2D rectangle $(x_{\mathrm{min}}, x_{\mathrm{max}}) \times (y_{\mathrm{min}}, y_{\mathrm{max}})$.

\begin{table}[!htb]
\centering
\caption{Summary of the numerical experiment setups.}
\label{tab:setups}
  \begin{tabular}{c c c l c}
    \toprule
    \multicolumn{1}{c}{Setup} &
      \multicolumn{1}{c}{Dim.} &
      \multicolumn{1}{c}{Domain} &
      \multicolumn{1}{c}{Solution} &
      \multicolumn{1}{c}{Boundary Condition}
    \\
    \midrule
    Setup 1 & 1D & $(-1, 1)$ & $\phi(x) = \exp(-10 x^2)$ & Dirichlet
    \\
    Setup 2 & 1D & $(-2, 2)$ & $\phi(x) = \exp(-10 x^2) - \sqrt{\pi / 10} \operatorname{erf}(2 \sqrt{10}) / 4$ & periodic
    \\
    Setup 3 & 2D & $(-0.5, 0.5)^2$ & $\phi(x, y) = \cos(\pi x) \cos(\pi y)$ & Dirichlet
    \\
    Setup 4 & 2D & $(-1, 1)^2$ & $\phi(x, y) = 2 \cos(\pi x) \sin(2 \pi y)$ & periodic
    \\
    \bottomrule
  \end{tabular}
\end{table}

All methods are implemented in Julia \cite{bezanson2017julia}. We use
Trixi.jl \cite{ranocha2022adaptive,schlottkelakemper2021purely}
to compute steady state solutions of the hyperbolic system \eqref{eq:FOHS} and
SummationByPartsOperators.jl \cite{ranocha2021sbp} to implement the
corresponding elliptic approach. All source code required to
reproduce the numerical experiments is available online
\cite{ranocha2023discontinuousRepro}.

\begin{table}[!hbt]
\centering\small
  \caption{Numerical results of the 1D convergence experiments summarized in
           Table~\ref{tab:setups}.}
  \label{tab:1D}
  \begin{subfigure}{0.49\linewidth}
  \centering
  \setlength{\tabcolsep}{0.9ex}
    \caption{Convergence results for setup 1 with $p = 2$.}
    \begin{tabular}{rrrrr}
      \toprule
      \multicolumn{1}{c}{$N$} &
        \multicolumn{1}{c}{Error $\phi$} &
        \multicolumn{1}{c}{EOC} &
        \multicolumn{1}{c}{Error $q$} &
        \multicolumn{1}{c}{EOC} \\
      \midrule
       10 & 2.42e-02 &      & 6.88e-02 &      \\
       20 & 3.16e-03 & 2.94 & 8.64e-03 & 2.99 \\
       40 & 3.97e-04 & 2.99 & 1.08e-03 & 3.01 \\
       80 & 4.96e-05 & 3.00 & 1.34e-04 & 3.00 \\
      160 & 6.19e-06 & 3.00 & 1.67e-05 & 3.00 \\
      \bottomrule
    \end{tabular}
  \end{subfigure}%
  \vspace{\fill}%
  \begin{subfigure}{0.49\linewidth}
  \centering
  \setlength{\tabcolsep}{0.9ex}
    \caption{Convergence results for setup 1 with $p = 3$.}
    \begin{tabular}{rrrrr}
      \toprule
      \multicolumn{1}{c}{$N$} &
        \multicolumn{1}{c}{Error $\phi$} &
        \multicolumn{1}{c}{EOC} &
        \multicolumn{1}{c}{Error $q$} &
        \multicolumn{1}{c}{EOC} \\
      \midrule
       10 & 2.54e-03 &      & 6.73e-03 &      \\
       20 & 1.60e-04 & 3.99 & 4.17e-04 & 4.01 \\
       40 & 1.01e-05 & 3.99 & 2.60e-05 & 4.00 \\
       80 & 6.31e-07 & 4.00 & 1.62e-06 & 4.00 \\
      160 & 3.94e-08 & 4.00 & 1.01e-07 & 4.00 \\
      \bottomrule
    \end{tabular}
  \end{subfigure}%
  \\[1em]
  \begin{subfigure}{0.49\linewidth}
  \centering
  \setlength{\tabcolsep}{0.9ex}
    \caption{Convergence results for setup 2 with $p = 2$.}
    \begin{tabular}{rrrrr}
      \toprule
      \multicolumn{1}{c}{$N$} &
        \multicolumn{1}{c}{Error $\phi$} &
        \multicolumn{1}{c}{EOC} &
        \multicolumn{1}{c}{Error $q$} &
        \multicolumn{1}{c}{EOC} \\
      \midrule
       10 & 1.24e-01 &      & 4.36e-01 &      \\
       20 & 4.60e-02 & 1.43 & 6.45e-02 & 2.76 \\
       40 & 6.08e-03 & 2.92 & 8.07e-03 & 3.00 \\
       80 & 7.66e-04 & 2.99 & 1.00e-03 & 3.01 \\
      160 & 9.58e-05 & 3.00 & 1.25e-04 & 3.01 \\
      \bottomrule
    \end{tabular}
  \end{subfigure}%
  \vspace{\fill}%
  \begin{subfigure}{0.49\linewidth}
  \centering
  \setlength{\tabcolsep}{0.9ex}
    \caption{Convergence results for setup 2 with $p = 3$.}
    \begin{tabular}{rrrrr}
      \toprule
      \multicolumn{1}{c}{$N$} &
        \multicolumn{1}{c}{Error $\phi$} &
        \multicolumn{1}{c}{EOC} &
        \multicolumn{1}{c}{Error $q$} &
        \multicolumn{1}{c}{EOC} \\
      \midrule
       10 & 8.09e-02 &      & 1.06e-01 &      \\
       20 & 4.91e-03 & 4.04 & 6.33e-03 & 4.07 \\
       40 & 3.12e-04 & 3.98 & 3.91e-04 & 4.02 \\
       80 & 1.97e-05 & 3.99 & 2.44e-05 & 4.00 \\
      160 & 1.23e-06 & 4.00 & 1.52e-06 & 4.00 \\
      \bottomrule
    \end{tabular}
  \end{subfigure}%
\end{table}

We compute the $L^2$ errors of the numerical solutions on meshes with $N$
elements per coordinate direction using the Gauss-Lobatto-Legendre quadrature
rule associated with the DGSEM operators. The 1D results including the
experimental order of convergence (EOC) are shown in
Table~\ref{tab:1D}. These results are computed using direct sparse solvers
distributed with Julia \cite{bezanson2017julia}. Clearly, both the potential
$\phi$ and the gradient $q$ converge with optimal order $p + 1$ for polynomials
of degree~$p$. The results obtained by evolving the hyperbolic system
\eqref{eq:FOHS} match the results obtained from the elliptic implementation
and are thus not shown.

\begin{table}[!hbt]
\centering\small
  \caption{Numerical results of the 2D convergence experiments summarized in
           Table~\ref{tab:setups}.}
  \label{tab:2D}
  \begin{subfigure}{0.49\linewidth}
  \centering
  \setlength{\tabcolsep}{0.9ex}
    \caption{Convergence results for setup 3 with $p = 2$.}
    \begin{tabular}{rrrrrrr}
      \toprule
      \multicolumn{1}{c}{$N$} &
        \multicolumn{1}{c}{Error $\phi$} &
        \multicolumn{1}{c}{EOC} &
        \multicolumn{1}{c}{Error $q_1$} &
        \multicolumn{1}{c}{EOC} &
        \multicolumn{1}{c}{Error $q_2$} &
        \multicolumn{1}{c}{EOC} \\
      \midrule
         4 & 7.49e-02 &      & 3.34e-01 &      & 3.34e-01 &      \\
         8 & 2.51e-03 & 4.90 & 1.15e-02 & 4.86 & 1.15e-02 & 4.86 \\
        16 & 1.70e-04 & 3.88 & 8.48e-04 & 3.77 & 8.48e-04 & 3.77 \\
        32 & 1.46e-05 & 3.54 & 8.26e-05 & 3.36 & 8.26e-05 & 3.36 \\
        64 & 1.46e-06 & 3.33 & 9.23e-06 & 3.16 & 9.23e-06 & 3.16 \\
      \bottomrule
    \end{tabular}
  \end{subfigure}%
  \vspace{\fill}%
  \begin{subfigure}{0.49\linewidth}
  \centering
  \setlength{\tabcolsep}{0.9ex}
    \caption{Convergence results for setup 3 with $p = 3$.}
    \begin{tabular}{rrrrrrr}
      \toprule
      \multicolumn{1}{c}{$N$} &
        \multicolumn{1}{c}{Error $\phi$} &
        \multicolumn{1}{c}{EOC} &
        \multicolumn{1}{c}{Error $q_1$} &
        \multicolumn{1}{c}{EOC} &
        \multicolumn{1}{c}{Error $q_2$} &
        \multicolumn{1}{c}{EOC} \\
      \midrule
         4 & 1.81e-04 &      & 1.29e-03 &      & 1.29e-03 &      \\
         8 & 1.48e-05 & 3.61 & 9.83e-05 & 3.71 & 9.83e-05 & 3.71 \\
        16 & 8.60e-07 & 4.10 & 6.05e-06 & 4.02 & 6.05e-06 & 4.02 \\
        32 & 4.99e-08 & 4.11 & 3.69e-07 & 4.03 & 3.69e-07 & 4.03 \\
        64 & 3.04e-09 & 4.04 & 2.56e-08 & 3.85 & 2.56e-08 & 3.85 \\
      \bottomrule
    \end{tabular}
  \end{subfigure}%
  \\[1em]
  \begin{subfigure}{0.49\linewidth}
  \centering
  \setlength{\tabcolsep}{0.9ex}
    \caption{Convergence results for setup 4 with $p = 2$.}
    \begin{tabular}{rrrrrrr}
      \toprule
      \multicolumn{1}{c}{$N$} &
        \multicolumn{1}{c}{Error $\phi$} &
        \multicolumn{1}{c}{EOC} &
        \multicolumn{1}{c}{Error $q_1$} &
        \multicolumn{1}{c}{EOC} &
        \multicolumn{1}{c}{Error $q_2$} &
        \multicolumn{1}{c}{EOC} \\
      \midrule
         4 & 1.31e+00 &      & 4.05e+00 &      & 4.36e+00 &      \\
         8 & 1.55e-01 & 3.08 & 4.90e-01 & 3.05 & 7.01e-01 & 2.63 \\
        16 & 2.17e-02 & 2.84 & 6.86e-02 & 2.84 & 9.31e-02 & 2.91 \\
        32 & 2.83e-03 & 2.94 & 8.94e-03 & 2.94 & 1.19e-02 & 2.96 \\
        64 & 3.60e-04 & 2.97 & 1.14e-03 & 2.98 & 1.51e-03 & 2.98 \\
      \bottomrule
    \end{tabular}
  \end{subfigure}%
  \vspace{\fill}%
  \begin{subfigure}{0.49\linewidth}
  \centering
  \setlength{\tabcolsep}{0.9ex}
    \caption{Convergence results for setup 4 with $p = 3$.}
    \begin{tabular}{rrrrrrr}
      \toprule
      \multicolumn{1}{c}{$N$} &
        \multicolumn{1}{c}{Error $\phi$} &
        \multicolumn{1}{c}{EOC} &
        \multicolumn{1}{c}{Error $q_1$} &
        \multicolumn{1}{c}{EOC} &
        \multicolumn{1}{c}{Error $q_2$} &
        \multicolumn{1}{c}{EOC} \\
      \midrule
         4 & 8.67e-02 &      & 2.75e-01 &      & 8.63e-01 &      \\
         8 & 1.39e-02 & 2.64 & 4.37e-02 & 2.65 & 5.96e-02 & 3.86 \\
        16 & 9.35e-04 & 3.89 & 2.94e-03 & 3.89 & 3.90e-03 & 3.93 \\
        32 & 6.01e-05 & 3.96 & 1.89e-04 & 3.96 & 2.48e-04 & 3.97 \\
        64 & 3.81e-06 & 3.98 & 1.20e-05 & 3.98 & 1.56e-05 & 3.99 \\
      \bottomrule
    \end{tabular}
  \end{subfigure}%
\end{table}

The 2D results are shown in Table~\ref{tab:2D}. These results are computed using
the conjugate gradients (CG) implementation of Krylov.jl \cite{montoison2020krylov}
using matrix-free operators based on the interface of LinearOperators.jl
\cite{orban2020linearoperators}. Again, both the potential $\phi$ and the
gradient $q$ converge with optimal order.

\section*{Acknowledgments}

Special thanks to Jesse Chan for discussions related to this manuscript and
comments on an early draft.

\printbibliography

\end{document}